\newtheorem{theorem}{Theorem}[section]
\newtheorem{rem} [theorem] {Remark}
\newtheorem{cor} [theorem]{Corollary}
\newcommand{\ovprt}{\overline{\partial}}
\newcommand{\ovli}{\overline}
\newcommand{\dquer}{\overline\partial}
\numberwithin{equation}{section}
\title{ Sobolev inequalities and the  $\ovprt$-Neumann operator}
\author{ Friedrich Haslinger}
\thanks{Partially supported by the FWF-grant  P23664.}
 \address{ F. Haslinger: Fakult\"at  f\"ur Mathematik, Universit\"at Wien,
Oskar-Morgenstern-Platz 1, A-1090 Wien, Austria}
\email{ friedrich.haslinger@univie.ac.at}
\keywords{$\ovprt $-Neumann problem, Sobolev inequalities, compactness}
\subjclass[2010] {Primary 32W05; Secondary  30H20, 35P10}
\begin{document}

\maketitle

\begin{abstract} ~\ We study a complex valued version of the Sobolev inequalities and its relationship to compactness of the $\ovprt$-Neumann operator.
For this purpose we use an abstract characterization of compactness derived from a general description of precompact subsets in $L^2$-spaces. Finally we remark that the $\ovprt$-Neumann operator can be continuously extended provided a subelliptic estimate holds.
\end{abstract}

\section{Introduction.}~\\
Let  $\Omega $ be a bounded open set in $\mathbb{R}^n,$ and $k$  a nonnegative integer. We denote by
$W^k(\Omega)$
 the Sobolev space
$$W^k(\Omega) = \{ f\in L^2(\Omega ) \, : \, \partial^\alpha f \in L^2(\Omega ) ,  \, |\alpha |\le k \},$$
where the derivatives are taken in the sense of distributions and endow the space with the norm
\begin{equation*}
\|f\|_{k,\Omega} = \left ( \sum_{|\alpha |\le k} \int_\Omega |\partial^\alpha f |^2 \,d\lambda \right )^{1/2},
\end{equation*}
where $\alpha =(\alpha_1, \dots ,\alpha_n)$ is a multiindex , $|\alpha |=\sum_{j=1}^n \alpha_j$ and 
$$\partial^\alpha f =\frac{\partial^{|\alpha |}f}{\partial x_1^{\alpha_1}\dots \partial x_n^{\alpha_n}}.$$
$W^k(\Omega)$ is a Hilbert space.
 If $\Omega \subset \mathbb{R}^n \, , \, n\ge 2,$  is a bounded domain with a $\mathcal{C}^1$ boundary, the Rellich-Kondrachov lemma says that for $n>2$ one has
$$W^1(\Omega ) \subset L^r(\Omega) \ , \ r\in [1, 2n/(n-2))$$
and that the imbedding is also compact; for $n=2$ one can take $r\in [1,\infty)$ (see for instance \cite{Br}), in particular, there exists a constant $C_r$ such that 
\begin{equation}\label{eq: sob1}
\|f\|_r \le C_r \|f\|_{1,\Omega},
\end{equation}
for each $f\in W^1(\Omega),$ where
$$\|f\|_r =  \left ( \int_\Omega |f|^r \, d\lambda \right )^{1/r}.$$
Now let $\Omega \subseteq \mathbb{C}^n ( \cong \mathbb{R}^{2n} )$ be a smoothly bounded pseudoconvex domain.
 We consider the 
$\ovprt $-complex 
\begin{equation}\label{eq: dbarcomplex1}
L^2(\Omega )\overset{\ovprt }\longrightarrow L^2_{(0,1)}(\Omega)
\overset{\ovprt }\longrightarrow \dots \overset{\ovprt }\longrightarrow
L^2_{(0,n)}(\Omega)\overset{\ovprt }\longrightarrow 0\, ,  
\end{equation}
where $L^2_{(0,q)}(\Omega)$ denotes the space of $(0,q)$-forms on $\Omega$ with
coefficients in $L^2(\Omega).$ The $\ovprt $-operator on $(0,q)$-forms is given by
\begin{equation}\label{eq: deriv1}
\ovprt \left ( \sum_J\,^{'}  a_J \, d\ovli z_J \right )=
\sum_{j=1}^n \sum_J\,^{'}\  \frac{\partial a_J}{\partial \ovli z_j}d\ovli z_j\wedge
d\ovli z_J,
\end{equation}
where $\sum  ^{'} $ means that the sum is only taken over strictly increasing multi-indices $J.$

The derivatives are taken in the sense of distributions, and the domain of $\ovprt $
consists of those $(0,q)$-forms for which the right hand side belongs to
$L^2_{(0,q+1)}(\Omega).$ So $\ovprt $ is a densely defined closed operator\index{closed operator}, and
therefore has an adjoint operator from $L^2_{(0,q+1)}(\Omega)$ into
$L^2_{(0,q)}(\Omega)$ denoted by $\ovprt ^* .$ 

We consider the 
$\ovprt $-complex \index{$\ovprt $-complex}
\begin{equation}\label{eq: complex0}
L^2_{(0,q-1)}(\Omega )\underset{\underset{\ovprt^* }
\longleftarrow}{\overset{\ovprt }
{\longrightarrow}} L^2_{(0,q)}(\Omega )
\underset{\underset{\ovprt^* }
\longleftarrow}{\overset{\ovprt }
{\longrightarrow}} L^2_{(0,q+1)}(\Omega ),
\end{equation}
for $1\le q \le n-1.$

We remark that a $(0,q+1)$-form $u=\sum_{J}^{'} u_J\,d\ovli z_J$ belongs to $\mathcal{C}^\infty_{(0,q+1)}(\ovli \Omega ) \cap {\text{dom}}(\ovprt ^*)$ if and only if
\begin{equation}\label{eq: dom2}
\sum_{k=1}^n u_{kK} \, \frac{\partial r}{\partial z_k} =0
\end{equation}
on $b\Omega$ for all $K$ with $|K|=q,$ where $r$ is a defining function of  $\Omega$ with $|\nabla r(z)|=1$ on the boundary $b\Omega.$ (see for instance \cite{Str})

The complex Laplacian 
$\Box = \ovprt \, \ovprt ^* + \ovprt ^*\,  \ovprt $\index{$\Box$},  defined on the domain
$${\text{dom}}(\Box) = \{ u\in L^2_{(0,q)}(\Omega ) : u\in {\text{dom}}(\ovprt ) \cap  {\text{dom}}(\ovprt^*) ,  \ovprt u\in  {\text{dom}}(\ovprt^*)  ,  \ovprt^* u \in  {\text{dom}}(\ovprt ) \}$$
acts as
an unbounded, densely defined, closed and self-adjoint  operator\index{self-adjoint operator} on 
$L^2_{(0,q)}(\Omega ),$ for $ 1\le q \le n,$ which means that  $\Box = \Box^*$ and ${\text{dom}}(\Box ) =  {\text{dom}}(\Box^*).$  

Note that
\begin{equation}\label{eq: diri1}
(\Box u,u)=( \ovprt \, \ovprt ^* u+ \ovprt ^* \, \ovprt u,u)=\| \ovprt u \|^2 + \| \ovprt ^* u \|^2,
\end{equation}
for $u\in {\text{dom}}(\Box ).$

If $\Omega $ is
  a smoothly bounded pseudoconvex domain in $\mathbb{C}^n,$
  the so-called basic estimate says that
  
 \begin{equation}\label{eq: diri2}
 \| \ovprt u \|^2 + \| \ovprt ^* u \|^2 \ge c \, \|u\|^2,  
 \end{equation}
  for each $u\in {\text{dom}}(\ovprt) \cap {\text{dom}}(\ovprt^* ) , \ c>0.$

 This estimate implies that  $ \Box :  {\text{dom}}(\Box) \longrightarrow L^2_{(0,q)}(\Omega )$\index{$\Box$}
 is bijective and has a bounded inverse 
 $$N:  L^2_{(0,q)}(\Omega ) \longrightarrow {\text{dom}}(\Box). $$
 $N$ is called $\ovprt$-Neumann operator\index{$\ovprt$-Neumann operator}.
In addition 
  \begin{equation}\label{eq: cont5}
 \|N u\| \le \frac{1}{c} \, \|u\|.
 \end{equation}
A different approach to the $\ovprt$-Neumann operator is related to the quadratic form
$$Q(u,v)= (\ovprt u,\ovprt v)+(\ovprt^*u,\ovprt^*v).$$
 For this purpose we consider the embedding
$$j :  {\text{dom}}(\ovprt )\cap  {\text{dom}}(\ovprt^*) \longrightarrow  L^2_{(0,q)}(\Omega ),$$ 
where $ {\text{dom}}(\ovprt )\cap  {\text{dom}}(\ovprt^*)$ is endowed with the graph-norm 
$$u\mapsto (\|\ovprt u\|^2 + \|\ovprt^*u \|^2)^{1/2}.$$ 
The graph-norm stems from the inner product $Q(u,v).$
The basic estimates \eqref{eq: diri2} imply that $j$ is a bounded operator with operator norm 
$$\|j\| \le \frac{1}{\sqrt c}.$$ 
By \eqref{eq: diri2} it follows in addition that $ {\text{dom}}(\ovprt )\cap  {\text{dom}}(\ovprt^*)$  endowed with the graph-norm $u\mapsto (\|\ovprt u\|^2 + \|\ovprt^*u \|^2)^{1/2}$ is a Hilbert space.

The $\ovprt$-Neumann operator $N$ can be written in the form
\begin{equation}\label{eq: dbarN}
N= j \circ j^*,
\end{equation}
details may be found in \cite{Str}.

\section{Compactness and Sobolev inequalities.}~\\

Here we apply a general characterization of compactness of the $\ovprt$-Neumann operator $N$ using a description of precompact subsets in $L^2$-spaces (see \cite{Has5}).

 \begin{theorem}\label{sec: catli} Let $\Omega \subset\subset \mathbb{C}^n$ be a smoothly bounded pseudoconvex domain.  The $\ovprt $-Neumann operator\index{$\dquer$-Neumann operator} $N$ is compact\index{compact operator} if and only if  
   for each $\epsilon >0$ there exists $\omega \subset\subset  \Omega$ such that 

$$ \int_{\Omega \setminus \omega} |u(z)|^2\,d\lambda(z) \le \epsilon ( \|\ovprt u\| ^2 + \| \ovprt^*  u\| ^2)$$

for each $u\in dom\,(\ovprt) \,\cap dom\,(\ovprt^*). $
\end{theorem}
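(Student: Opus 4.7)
The plan is to reduce the equivalence to a compactness criterion for the embedding $j:\mathrm{dom}(\ovprt)\cap\mathrm{dom}(\ovprt^*)\to L^2_{(0,q)}(\Omega)$ and then identify the stated inequality with the ``mass escape'' clause of a Kolmogorov--Riesz style description of precompact subsets of $L^2$. By \eqref{eq: dbarN} we have $N=j\circ j^*$, and $N$ is compact iff $j$ is compact: indeed $j$ compact implies $j^*$ compact and hence $N=jj^*$ compact, while conversely $N$ compact gives that $|j^*|=N^{1/2}$ is compact, and the polar decomposition $j^*=U|j^*|$ then yields compactness of $j^*$ and of $j$. So it suffices to characterise precompactness in $L^2_{(0,q)}(\Omega)$ of the unit ball
\[
B=\{u\in\mathrm{dom}(\ovprt)\cap\mathrm{dom}(\ovprt^*)\,:\,\|\ovprt u\|^2+\|\ovprt^* u\|^2\le 1\}.
\]

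For the forward direction I would appeal to the Fr\'echet--Kolmogorov theorem, which characterises precompactness of a bounded family in $L^2(\Omega)$ by (a)~uniform vanishing of the tails $\int_{\Omega\setminus\omega}|u|^2\,d\lambda$ as $\omega\nearrow\Omega$, and (b)~uniform translation continuity in $L^2$. By homogeneity the stated condition is precisely (a) applied to $B$. To verify (b), fix a compact $K\subset\Omega$ and a cutoff $\chi\in\mathcal{C}^\infty_c(\Omega)$ with $\chi\equiv 1$ on $K$. Because $\chi u$ has compact support, $\chi u\in\mathrm{dom}(\ovprt^*)$ automatically, and the Morrey--Kohn--H\"ormander identity (equivalently, interior ellipticity of the first order system $\ovprt\oplus\ovprt^*$) yields
\[
\|\chi u\|_{W^1(\Omega)}^2\le C_\chi\bigl(\|\ovprt u\|^2+\|\ovprt^* u\|^2+\|u\|^2\bigr)
\]
uniformly for $u\in B$. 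The Rellich--Kondrachov lemma \eqref{eq: sob1} then supplies translation equicontinuity on $K$; combined with the tail estimate (a) one obtains translation equicontinuity on all of $\Omega$, proving precompactness of $B$.

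For the converse, assume $N$ (equivalently $j$) is compact but the theorem's inequality fails. Then for some $\epsilon_0>0$ there exist an exhaustion $\omega_k\nearrow\Omega$ and forms $u_k\in\mathrm{dom}(\ovprt)\cap\mathrm{dom}(\ovprt^*)$ with $\|\ovprt u_k\|^2+\|\ovprt^* u_k\|^2=1$ yet $\int_{\Omega\setminus\omega_k}|u_k|^2\,d\lambda>\epsilon_0$. Compactness of $j$ extracts an $L^2$-convergent subsequence $u_{k_m}\to u$; dominated convergence ensures $\int_{\Omega\setminus\omega_k}|u|^2\,d\lambda\to 0$, and $\|u_{k_m}-u\|\to 0$, so $\int_{\Omega\setminus\omega_{k_m}}|u_{k_m}|^2\,d\lambda\to 0$, contradicting the lower bound $\epsilon_0$.

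The only delicate step is the uniform interior $W^1$-bound in (b), obtained via Morrey--Kohn--H\"ormander after multiplying by a cutoff; the remaining pieces---the $N=jj^*$ reduction, Fr\'echet--Kolmogorov, and the sequential contradiction argument for the converse---are routine once that estimate is in place.
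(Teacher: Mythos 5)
Your proposal is correct, but note that the paper does not prove Theorem~\ref{sec: catli} at all: it is quoted from the reference \cite{Has5}, which the author describes as resting on ``a general description of precompact subsets in $L^2$-spaces.'' Your argument --- reducing compactness of $N=j\circ j^*$ to precompactness of the graph-norm unit ball, identifying the stated inequality with the uniform-tail clause of a Fr\'echet--Kolmogorov/Kolmogorov--Riesz criterion, supplying the local clause via interior ellipticity of $\ovprt\oplus\ovprt^*$ applied to $\chi u$, and handling the converse by a normalization-and-subsequence contradiction --- is exactly the route that reference takes, and each step (including the automatic membership $\chi u\in \mathrm{dom}(\ovprt^*)$ for compactly supported forms and the normalization being legitimate because $\|\ovprt u\|^2+\|\ovprt^*u\|^2=0$ forces $u=0$ by the basic estimate \eqref{eq: diri2}) is sound.
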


Now let
$$\mathcal{W}^1_{{0,q}}(\Omega ):= \{ u\in L^2_{(0,q)}(\Omega) : u\in dom\,(\ovprt) \,\cap dom\,(\ovprt^*) \}$$ 
endowed with graph norm. As already mentioned above, this "complex" version of a Sobolev space $\mathcal{W}^1_{{0,q}}(\Omega )$ is a Hilbert space.
 
It appears to be interesting to compare the standard Sobolev imbedding
$$W^1(\Omega ) \subset L^r(\Omega) \ , \ r\in [1, 2n/(n-1))$$
where the derivatives are taken with respect of the real variables $x_j =\Re z_j$ and $y_j=\Im z_j$ for $j=1,\dots ,n,$ with the imbedding of the space 
$\mathcal{W}^1_{(0,q)} (\Omega )$ endowed with graph norm, into $L^r_{(0,q)}(\Omega ).$ We have the following result

\begin{theorem}\label{sec: sobo1}
If $\Omega \subset\subset \mathbb{C}^n$ is a smoothly bounded pseudoconvex domain and the inequality
\begin{equation}\label{eq: sobcomp}
\|u\|_r \le C ((\| \ovprt u \|^2 + \| \ovprt ^*u\| ^2)^{1/2}
\end{equation}
for some $r>2$ and for all $u\in dom\,(\ovprt) \,\cap dom\,(\ovprt^*)$ holds, then the $\ovprt$-Neumann operator 
$$N: L^2_{(0,q)}(\Omega) \longrightarrow L^2_{(0,q)}(\Omega)$$
is compact.
\end{theorem}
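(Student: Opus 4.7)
The plan is to verify the hypothesis of Theorem \ref{sec: catli} directly. So I need to show that the given Sobolev-type inequality implies that for every $\epsilon >0$ there exists a relatively compact $\omega \subset\subset \Omega$ with
$$\int_{\Omega \setminus \omega} |u(z)|^2 \, d\lambda(z) \le \epsilon \,(\|\ovprt u\|^2 + \|\ovprt^* u\|^2)$$
for all $u \in \mathrm{dom}(\ovprt) \cap \mathrm{dom}(\ovprt^*)$. Once this is established, Theorem \ref{sec: catli} delivers compactness of $N$ immediately.

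The mechanism for passing from an $L^r$-bound ($r>2$) to the required smallness in $L^2$ is simply Hölder's inequality. For any measurable set $A \subseteq \Omega$ and any $u \in L^r(\Omega)$, since $r>2$, the conjugate exponents $r/2$ and $r/(r-2)$ yield
$$\int_A |u|^2 \, d\lambda \le \left( \int_A |u|^r \, d\lambda \right)^{2/r} \lambda(A)^{1-2/r} = \|u\|_r^2 \, \lambda(A)^{1-2/r}.$$
Combining this with the hypothesis \eqref{eq: sobcomp} applied to $u \in \mathrm{dom}(\ovprt) \cap \mathrm{dom}(\ovprt^*)$ gives
$$\int_A |u|^2 \, d\lambda \le C^2 \, \lambda(A)^{1-2/r} \, (\|\ovprt u\|^2 + \|\ovprt^* u\|^2).$$

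To finish, I choose $\omega \subset\subset \Omega$ by exhausting $\Omega$ with compact subsets so that $\lambda(\Omega \setminus \omega)$ is arbitrarily small; this is possible since $\Omega$ has finite Lebesgue measure. Because $r>2$, the exponent $1 - 2/r$ is strictly positive, so one can select $\omega$ with $C^2 \lambda(\Omega \setminus \omega)^{1-2/r} \le \epsilon$. Substituting $A = \Omega \setminus \omega$ then produces the estimate needed to invoke Theorem \ref{sec: catli}.

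There is no real obstacle here: the argument is essentially just Hölder plus the abstract compactness criterion. The only place where something could go wrong is the requirement $r > 2$, which is precisely what is needed to make $\lambda(\Omega \setminus \omega)^{1-2/r} \to 0$ as $\omega$ exhausts $\Omega$; had we only $r = 2$, the Hölder step would produce no gain and the method would fail.
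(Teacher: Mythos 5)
Your argument is correct and coincides with the paper's own proof: both reduce the statement to the compactness criterion of Theorem \ref{sec: catli} and use H\"older's inequality with exponents $r/2$ and $r/(r-2)$ together with \eqref{eq: sobcomp} to bound $\int_{\Omega\setminus\omega}|u|^2\,d\lambda$ by a constant times $\lambda(\Omega\setminus\omega)^{1-2/r}(\|\ovprt u\|^2+\|\ovprt^*u\|^2)$, then shrink $\Omega\setminus\omega$. The only cosmetic difference is that the paper works with the unnormalized inequality on the unit ball of the graph-norm space rather than carrying the factor $(\|\ovprt u\|^2+\|\ovprt^*u\|^2)^{1/2}$ explicitly; the substance is identical.
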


\begin{proof}
To show this we have to check that the unit ball in $\mathcal{W}^1_{(0,q)} (\Omega )$ is precompact in $L^2_{(0,q)}(\Omega).$ By Proposition  \ref{sec: catli}, we have to show that for each $\epsilon >0$ there exists $\omega \subset\subset \Omega$ such that
$$\int_{\Omega \setminus \omega} |u(z)|^2 \, d\lambda (z) < \epsilon^2,$$
for all $u$ in the unit ball of $\mathcal{W}^1_{(0,q)} (\Omega ).$

By \eqref{eq: sobcomp} and H\"older's inequality\index{H\"older's inequality} we have
\begin{eqnarray*}
\left ( \int_{\Omega \setminus \omega} |u(z)|^2 \, d\lambda (z) \right )^{\frac{1}{2}} &\le & \left (\int_{\Omega \setminus \omega} |u(z)|^r \, d\lambda (z)\right )^{\frac{1}{r}} \cdot |\Omega \setminus \omega|^{\frac{1}{2}-\frac{1}{r}} \\
& \le & C \,  |\Omega \setminus \omega|^{\frac{1}{2}-\frac{1}{r}}.
\end{eqnarray*}
Now we can choose $\omega \subset\subset \Omega$ such that the last term is $< \epsilon.$
\end{proof}
In the following Theorem we suppose that a so-called subelliptic estimate holds. Subelliptic estimates are related to the geometric notion of finite type. We remark that the $\ovprt$-Neumann problem for smoothly bounded strictly pseudoconvex domains is subelliptic with a gain of one derivative for $N$ which is considerably stronger than compactness. 
\begin{theorem}\label{sec: sobol2}
Let $\Omega$ be a  bounded pseudoconvex domain in $\mathbb{C}^n$ with boundary of class $\mathcal{C}^\infty.$ Suppose that $0< \epsilon \le 1/2$ and that
$$ dom\,(\ovprt) \,\cap dom\,(\ovprt^*) \subseteq W^{\epsilon}_{(0,q)} (\Omega ),$$
and that there exists a constant $C>0$ such that
\begin{equation}\label{eq: subell}
\| u \|_{\epsilon, \Omega} \le C (\|\ovprt u \|^2 + \| \ovprt^* u\|^2)^{1/2},
\end{equation}
for all $u \in  dom\,(\ovprt) \,\cap dom\,(\ovprt^*),$ where $W^{\epsilon}_{(0,q)} (\Omega )$ is the standard $\epsilon$-Sobolev space. Then the $\ovprt$-Neumann operator  
$$N: L^2_{(0,q)}(\Omega) \longrightarrow L^2_{(0,q)}(\Omega)$$
is compact and $N$ can be continuously extended as an operator
$$\tilde N: L^{\frac{2n}{n+\epsilon}}_{(0,q)}(\Omega ) \longrightarrow L^{\frac{2n}{n-\epsilon}}_{(0,q)}(\Omega ),$$
which means that there is a constant $C>0$ such that
\begin{equation}\label{eq: lpineq}
\| \tilde N u \|_{\frac{2n}{n-\epsilon}} \le C \, \| u \|_{\frac{2n}{n+\epsilon}},
\end{equation}
for each $u\in  L^{\frac{2n}{n+\epsilon}}_{(0,q)}(\Omega ).$

\end{theorem}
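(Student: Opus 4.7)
The plan is to combine the subelliptic estimate \eqref{eq: subell} with two classical embedding results for the Sobolev space $W^\epsilon_{(0,q)}(\Omega)$: the fractional Rellich--Kondrachov theorem, stating that for a bounded smooth domain the embedding $W^\epsilon \hookrightarrow L^2$ is compact whenever $\epsilon>0$; and the fractional Sobolev embedding $W^\epsilon(\Omega)\hookrightarrow L^{2n/(n-\epsilon)}(\Omega)$ on the real-$2n$-dimensional domain $\Omega$, which requires $\epsilon<n$ (satisfied since $\epsilon\le 1/2\le n$). For the compactness part, I observe that the hypothesis realises the embedding $j:\mathcal{W}^1_{(0,q)}(\Omega)\to L^2_{(0,q)}(\Omega)$ as the composition
$$
\mathcal{W}^1_{(0,q)}(\Omega)\ \hookrightarrow\ W^\epsilon_{(0,q)}(\Omega)\ \hookrightarrow\ L^2_{(0,q)}(\Omega),
$$
continuous then compact; thus $j$ is compact, and since $N=j\circ j^*$ and the compact operators form a two-sided ideal, $N$ is compact as well. (Alternatively, the same conclusion follows from Theorem~\ref{sec: catli} combined with the fractional Rellich characterisation of precompact sets in $L^2$.)

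For the $L^p$-extension, the plan is to first upgrade $j$ to map into a larger $L^p$-space and then dualise. Combining the fractional Sobolev embedding with \eqref{eq: subell} gives
$$
\|u\|_{2n/(n-\epsilon)}\ \le\ C\bigl(\|\ovprt u\|^2+\|\ovprt^* u\|^2\bigr)^{1/2}\qquad\text{for all } u\in\mathcal{W}^1_{(0,q)}(\Omega),
$$
so $j$ is bounded from $\mathcal{W}^1_{(0,q)}(\Omega)$ into $L^{2n/(n-\epsilon)}_{(0,q)}(\Omega)$. I would then identify $\mathcal{W}^1_{(0,q)}(\Omega)$ with its own dual via the Riesz theorem applied to the inner product $Q$, and the dual of $L^{2n/(n-\epsilon)}_{(0,q)}(\Omega)$ with $L^{2n/(n+\epsilon)}_{(0,q)}(\Omega)$ through the integration pairing. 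The Banach-space adjoint of this extended $j$ is then a bounded operator
$$
\widetilde{j^*}:L^{2n/(n+\epsilon)}_{(0,q)}(\Omega)\longrightarrow \mathcal{W}^1_{(0,q)}(\Omega),
$$
and setting $\tilde N:=j\circ\widetilde{j^*}$ yields a bounded operator from $L^{2n/(n+\epsilon)}_{(0,q)}(\Omega)$ into $L^{2n/(n-\epsilon)}_{(0,q)}(\Omega)$, whose norm bound is precisely \eqref{eq: lpineq}.

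The only genuinely nontrivial point is the compatibility of this construction with the original $\ovprt$-Neumann operator. Since $\Omega$ is bounded we have $L^2_{(0,q)}(\Omega)\subset L^{2n/(n+\epsilon)}_{(0,q)}(\Omega)$, and one must verify that $\widetilde{j^*}$ restricts on this subspace to the Hilbert-space adjoint $j^*$, so that $\tilde N$ truly extends $N=j\circ j^*$. This follows from a direct pairing computation: for $v\in L^2_{(0,q)}(\Omega)$ and $f\in\mathcal{W}^1_{(0,q)}(\Omega)$, both $Q(\widetilde{j^*}v,f)$ and $Q(j^*v,f)$ equal $(v,jf)_{L^2}$. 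Beyond this bookkeeping step the argument is routine; only Riesz representation and the two Sobolev embeddings listed at the outset are used.
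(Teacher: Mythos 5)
Your argument is correct, and for the $L^p$-extension it follows essentially the paper's own route --- factor $N=j\circ j^*$, upgrade the target of $j$ to $L^{2n/(n-\epsilon)}_{(0,q)}(\Omega)$ by combining the fractional Sobolev embedding with \eqref{eq: subell}, and dualise --- except that you spell out the duality identifications and the compatibility check that $\widetilde{j^*}$ agrees with the Hilbert-space adjoint $j^*$ on $L^2_{(0,q)}(\Omega)$, a point the paper dispatches in one line; making that bookkeeping explicit (together with the density of $L^2_{(0,q)}(\Omega)$ in $L^{2n/(n+\epsilon)}_{(0,q)}(\Omega)$, which is what makes $\tilde N$ \emph{the} continuous extension) is a genuine improvement in rigour. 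Where you diverge is the compactness part: the paper uses only the \emph{continuous} embedding $W^{\epsilon}(\Omega)\hookrightarrow L^{r_0}(\Omega)$ for some $r_0>2$ and then feeds the resulting $L^{r_0}$-inequality into Theorem \ref{sec: sobo1}, hence ultimately into the characterization of Theorem \ref{sec: catli} via H\"older's inequality on small neighbourhoods of the boundary; you instead invoke the \emph{compact} embedding $W^{\epsilon}(\Omega)\hookrightarrow L^2(\Omega)$ to conclude directly that $j$ is compact and therefore so is $N=j\circ j^*$ by the ideal property of compact operators. Both mechanisms are valid on a smoothly bounded domain; the paper's route stays within its own $L^r$-inequality framework and needs nothing beyond continuity of the fractional embedding, while yours is shorter and self-contained, bypassing Theorems \ref{sec: catli} and \ref{sec: sobo1} entirely at the cost of invoking the fractional Rellich--Kondrachov theorem.
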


\begin{proof} We use the continuous imbedding  for the space $W^{\epsilon}(\Omega ):$

$$W^{\epsilon}(\Omega ) \longrightarrow L^r(\Omega),$$
for $2\le r \le 2n/(n-\epsilon),$ (see \cite{AF}, Theorem 7.57).
Hence we can choose $r_0 >2$ to get 
$$dom\,(\ovprt) \,\cap dom\,(\ovprt^*) \subseteq W^{\epsilon}_{(0,q)} (\Omega ) \subseteq L^{r_0}_{(0,q)} (\Omega),$$
and we can apply Theorem \ref{sec: sobo1}.
\vskip 0.5 cm

To show that $N$ extends continuously recall that $N=j \circ j^*,$ where 
$$j: dom\,(\ovprt) \,\cap dom\,(\ovprt^*) \longrightarrow L^2_{(0,q)}(\Omega),$$
see \cite{Str}. In our case $j$ is a continuous operator into  $L^{\frac{2n}{n-\epsilon}}_{(0,q)}(\Omega ),$ hence
$$j^* :  L^{\frac{2n}{n+\epsilon}}_{(0,q)}(\Omega ) \longrightarrow dom\,(\ovprt) \,\cap dom\,(\ovprt^*),$$
which proves the assertion. 
\end{proof}

St. Krantz \cite{KR2}, R. Beals, P.C. Greiner and N.K. Stanton \cite{BGS}, I.Lieb and R.M. Range \cite{LR}, and A. Bonami and N. Sibony \cite{BoSi} proved $L^p$-estimates and Lipschitz estimates for solution operators of the inhomogeneous $\ovprt$-equation and the $\ovprt$-Neumann operator using integral representations for the kernel of these operators, but without relationship to compactness and continuous extendability.

\begin{rem} If $\Omega$ is a bounded strictly pseudoconvex domain in $\mathbb{C}^n$ with boundary of class $\mathcal{C}^\infty,$ then \eqref{eq: subell} is satisfied for $\epsilon = 1/2$ (see \cite{Str}, Proposition 3.1).

D'Angelo (\cite{DA79}, \cite{DA82}) and Catlin \cite{Ca83}, \cite{Ca84}, \cite{Ca87}) give a characterization of when a subelliptic estimate holds in terms of the geometric notion of finite type, see also \cite{Str}.

\end{rem}

\begin{cor}\label{sec: nonco}
Let $\Omega$ be a smooth bounded pseudoconvex domain in $\mathbb{C}^n, \ n \ge 2.$ Let $P \in b\Omega$ and assume that there is an m-dimensional complex manifold $M\subset b\Omega$ through $P \ (m\ge 1),$ and $b\Omega$ is strictly pseudoconvex at $P$ in the directions transverse to $M$ (this condition is void when $n=2$). Then \eqref{eq: sobcomp} is not satisfied for $(0,q)$-forms with $1\le q \le m.$ 

\end{cor}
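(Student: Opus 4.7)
The plan is proof by contradiction via Theorems \ref{sec: sobo1} and \ref{sec: catli}. If \eqref{eq: sobcomp} held for some $1\le q\le m$, Theorem \ref{sec: sobo1} would force compactness of the $\ovprt$-Neumann operator $N_q$, and Theorem \ref{sec: catli} would then yield, for every $\epsilon>0$, an $\omega\subset\subset\Omega$ with
$$\int_{\Omega\setminus\omega}|u|^2\,d\lambda \le \epsilon\bigl(\|\ovprt u\|^2+\|\ovprt^* u\|^2\bigr)$$
for all $(0,q)$-forms $u\in\mathrm{dom}(\ovprt)\cap\mathrm{dom}(\ovprt^*)$. I would refute this by producing a sequence in $\mathcal{W}^1_{(0,q)}(\Omega)$ whose $L^2$-mass concentrates at the boundary point $P$ while its graph norm stays comparable to its $L^2$-norm.

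The construction is local at $P$. Using the strict pseudoconvexity of $b\Omega$ transverse to $M$, I would choose holomorphic coordinates near $P$ in which $M\cap U = \{z_{m+1}=\dots=z_n=0\}\cap U$ and
$$r(z) = 2\,\Re z_n + \sum_{j=m+1}^{n-1}|z_j|^2 + R(z),$$
with $R$ of higher order. Fix a cutoff $\chi\in C_c^\infty(U)$ equal to $1$ near $P$, a nonzero $g\in C_c^\infty$ in the tangential variables $z'=(z_1,\dots,z_m)$, and the $L^2$-normalized transverse Gaussian $\psi_k(z'')=c_k e^{-k|z''|^2}$ with $z''=(z_{m+1},\dots,z_n)$. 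Set
$$u_k = \chi(z)\,g(z')\,\psi_k(z'')\,d\ovli z_1\wedge\dots\wedge d\ovli z_q + v_k,$$
where $v_k$ is a correction in the components carrying $d\ovli z_j$ with $j>m$, engineered to enforce the $\ovprt^*$-boundary condition $\sum_k u_{kK}\,\partial r/\partial z_k=0$ on $b\Omega$. Since $\partial r/\partial z_j$ vanishes along $M$ for $j\le m$ (a direct consequence of $M\subset b\Omega$ being complex of dimension $m$), $v_k$ can be taken supported in the transverse window $|z''|=O(k^{-1/2})$ and of lower order in the graph norm.

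Then I would verify the rates: $\|u_k\|_{L^2}$ stays bounded below by a multiple of $\|g\|_{L^2}$, while the transverse support of $u_k$ shrinks to $P$, so $\|u_k\|_{L^2(\omega)}\to 0$ for any $\omega\subset\subset\Omega$. The antiholomorphic derivatives of the main term arise only from $\ovprt\chi$ (supported away from $P$) and from $\ovprt\psi_k\wedge d\ovli z_J$ (balanced by the $L^2$-normalization of $\psi_k$); an analogous computation using the explicit formula for $\ovprt^*$, combined with the control of $v_k$, gives $\|\ovprt u_k\|^2+\|\ovprt^* u_k\|^2=O(\|u_k\|^2)$. Choosing $\epsilon$ smaller than the resulting constant contradicts the localization estimate.

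The main obstacle is producing the correction $v_k$ so that $u_k\in\mathrm{dom}(\ovprt^*)$ without spoiling the rates. This is exactly where the geometric hypotheses enter — via the vanishing of $\partial r/\partial z_j$ along $M$ for $j\le m$ together with the strict pseudoconvexity transverse to $M$, which justifies the normal form of $r$ used above. The same point explains the sharpness of the range $1\le q\le m$: for $q>m$ at least one $d\ovli z_j$ with $j>m$ is forced into the leading term, and the strict positivity of the Levi form in that direction defeats the rate analysis.
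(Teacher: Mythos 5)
Your logical reduction is the right one and is the same as the paper's: by Theorem \ref{sec: sobo1}, inequality \eqref{eq: sobcomp} would force compactness of $N$ on $(0,q)$-forms, so it suffices to know that $N$ fails to be compact for $1\le q\le m$. The paper stops there and quotes this non-compactness from Theorem 4.21 of \cite{Str}; you instead try to reprove it with an explicit concentrating sequence, and that is where the argument breaks down. The decisive claim, $\|\ovprt u_k\|^2+\|\ovprt^* u_k\|^2=O(\|u_k\|^2)$, is false for your $u_k$: the transverse factor $\psi_k(z'')=c_ke^{-k|z''|^2}$ lives at scale $|z''|\sim k^{-1/2}$, so $\partial\psi_k/\partial\ovli z_j=-kz_j\psi_k$ has $L^2$-norm of order $k^{1/2}\|\psi_k\|$, and since $q\le m$ the transverse differentials $d\ovli z_j$ with $j>m$ are not absorbed by the leading wedge factor $d\ovli z_1\wedge\dots\wedge d\ovli z_q$; hence $\|\ovprt u_k\|\ge c\,k^{1/2}\|u_k\|$. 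After normalizing in the graph norm your forms tend to $0$ in $L^2$, so they are perfectly consistent with the estimate of Theorem \ref{sec: catli} and produce no contradiction. This is not a fixable detail of the Gaussian: any non-holomorphic bump whose support shrinks pays the same uncertainty-principle price. The constructions that do work use holomorphic concentrating factors (powers or exponentials of a holomorphic function with negative real part on $\Omega$), which $\ovprt$ annihilates, and let the mass spread along all of $M$ rather than collapse onto the single point $P$.

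A second gap: the correction $v_k$ that is supposed to put $u_k$ into ${\text{dom}}(\ovprt^*)$ is asserted, not constructed. The observation that $\partial r/\partial z_j$ vanishes on $M$ for $j\le m$ only says the boundary condition \eqref{eq: dom2} is violated to first order off $M$; producing a $v_k$ that restores it exactly on $b\Omega$ while staying of lower order in the graph norm is precisely the technical heart of the proof of Theorem 4.21 in \cite{Str}, and it interacts with the rate problem above. As written, your proposal replaces the single citation the paper relies on with a construction that does not close; either invoke that theorem, as the paper does, or be prepared to carry out the full non-compactness argument of \cite{Str} in detail.
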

\begin{proof} Theorem 4.21 of \cite{Str} gives that the $\ovprt$-Neumann operator fails to be compact on $(0,q)$-forms with $1\le q \le m.$ Hence we can again apply Proposition \ref{sec: sobo1} to get the desired result.
\end{proof}

\begin{rem} If the Levi form of the defining function of $\Omega$ is known to have at most one degenerate eigenvalue at each point (the eigenvalue zero has multiplicity at most 1), a disk in the boundary is an obstruction to compactness of $N$ for $(0,1)$-forms. A special case of this is implicit in \cite{Kim} for domains fibered over a Reinhardt domain in $\mathbb{C}^2.$
\end{rem}

\section*{Acknowledgement}

 The author wishes to express his gratitude to the referee for helpful suggestions.

\bibliographystyle{amsplain}
\bibliography{mybibliography}

\end{document}